\newtheorem{prop}{Proposition}[section]
\newtheorem{lemma}[prop]{Lemma}
\newtheorem{theorem}[prop]{Theorem}
\newdefinition{defi}[prop]{Definition}
\newdefinition{remark}[prop]{Remark}
\newdefinition{example}[prop]{Example}
\newcommand{\R}{\mathbb{R}}       
\newcommand{\HH}{\mathbb{H}}    
\newcommand{\cat}{\mathop{\mathrm{cat}}}     
\newcommand{\id}{\mathrm{id}} 
\newcommand{\SP}{\mathop{\mathit{Sp}}} 
\newcommand{\rank}{\mathop{\mathrm{rank}}} 
\newcommand{\module}[1]{\vert #1 \vert}
\begin{document}

\begin{frontmatter}




\title{Symplectic matrices with predetermined left eigenvalues\tnoteref{label1}}
\tnotetext[label1]{Partially supported by FEDER and MICINN Spain Research Project MTM2008-05861}
\author{E. Mac\'{\i}as-Virg\'os}
\ead{quique.macias@usc.es}
 \ead[url]{web.usc.es/~xtquique}

\author{M. J. Pereira-S\'aez}
\ead{mariajose.pereira@usc.es}




\address{
Institute of Mathematics.
Department of Geometry and Topology.\\
University of Santiago de Compostela.
15782- SPAIN}

\begin{abstract}
We prove that given four arbitrary quaternion numbers of norm $1$ there always exists a $2\times 2$ symplectic matrix for which those numbers are left eigenvalues.  The proof is constructive. An application to the LS category of Lie groups is given.
\end{abstract} 

\begin{keyword}
quaternion\sep left eigenvalue\sep symplectic group\sep LS category

\MSC 15A33\sep 11R52 \sep 55M30 
\end{keyword}

\end{frontmatter}








\section{Introduction}

Left eigenvalues of quaternionic matrices are only partially understood.
While their existence is guaranteed by a result from Wood \cite{WOOD}, many usual properties of right eigenvalues are no longer valid in this context, see Zhang's paper \cite{ZHANG} for a detailed account. In particular, a  matrix may have infinite left eigenvalues (belonging to different similarity classes), as has been proved by Huang and So \cite{HUSO}. By using this result, the authors characterized in   \cite{MACPER}  the symplectic $2\times 2$ matrices which have an infinite spectrum. In the present paper we prove that given four arbitrary quaternions of norm $1$ there always exists a matrix in $\SP(2)$ for which those quaternions are left eigenvalues.  The proof is constructive. This non-trivial result is of interest for the computation of the so-called LS category, as we  explain in the last section of the paper.

\section{Left eigenvalues of symplectic matrices}
Let $Sp(2)$ be the Lie group of $2\times 2$ symplectic matrices, that is quaternionic matrices such that $AA^*=I$, where $A^*=\bar A^t$ is the conjugate transpose.

By definition, a quaternion $\sigma$ is a left eigenvalue of the matrix $A$ if there exists a vector $v\in\HH^2$, $v\neq 0$, such that $Av=\sigma v$; equivalently, $A-\sigma I$ is not invertible.

It is easy to prove that the left eigenvalues of a symplectic matrix must have norm $1$.

\begin{remark} $\HH^2$ will be always considered as a {\em right} quaternionic vector space, endowed with the product $\langle v,w\rangle=v^*w$.
\end{remark}

The following Theorem was proven by the authors in \cite{MACPER}, using the results of Huang and So in \cite{HUSO}.

\begin{theorem}\label{INFINITE}
 The only $2\times 2$ symplectic matrices with an infinite number of left eigenvalues are those of the form
$$
L_q\circ R_\theta=\left[\matrix{ q\cos\theta & -q\sin\theta\cr
q\sin\theta &  q\cos\theta} \right],\quad   q\in\HH, \module{q}=1, \quad \theta\in\R, \sin\theta\neq 0.
$$
Any other symplectic matrix has one or two left eigenvalues.
\end{theorem}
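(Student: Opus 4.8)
The plan is to compress the computation of the left spectrum of $A=\left[\matrix{a&b\cr c&d}\right]\in\SP(2)$ into a single quaternionic quadratic equation, and then decide when that equation has infinitely many roots.

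First I would dispose of the degenerate cases. From $AA^{*}=I$ one gets $\module a^{2}+\module b^{2}=1$ and $a\bar c+b\bar d=0$, so $b=0$ forces $\module a=1$ and hence $\bar c=0$; thus $b=0\iff c=0$, and then $A$ is diagonal, $A-\sigma I$ is singular exactly for $\sigma\in\{a,d\}$, giving one or two left eigenvalues, while such an $A$ is not of the form $L_{q}\circ R_{\theta}$ because the latter has nonzero off-diagonal entries $\mp q\sin\theta$. So assume $b,c\neq0$ from now on. A direct check shows the columns of $A-aI=\left[\matrix{0&b\cr c&d-a}\right]$ are right-linearly independent, so $\sigma=a$ is not a left eigenvalue; hence any left eigenvalue has $a-\sigma\neq0$, and also $d-\sigma\neq0$ (else the Schur complement $c(a-\sigma)^{-1}b$ would vanish while $b,c\neq0$). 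Therefore $\sigma$ is a left eigenvalue if and only if the Schur complement vanishes, i.e. $(a-\sigma)c^{-1}(d-\sigma)=b$.

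Next I would rewrite this identity. Putting $w=d-\sigma$ and $u=wb^{-1}$, an elementary manipulation using only associativity turns $(a-\sigma)c^{-1}(d-\sigma)=b$ into the unilateral quadratic $u^{2}+u\beta+\gamma=0$ with $\beta=(a-d)b^{-1}$ and $\gamma=-cb^{-1}$, and $\sigma\mapsto(d-\sigma)b^{-1}$ is a bijection from the left eigenvalues of $A$ onto the solutions of this quadratic. Now I invoke the structure theory of quaternionic quadratics, which is where the results of Huang and So \cite{HUSO} come in: the equation always has a solution, it has infinitely many precisely when $\beta,\gamma\in\R$ with $\beta^{2}-4\gamma<0$ (in which case the solution set is the $2$-sphere of quaternions $-\beta/2+\mathbf v$ with $\mathbf v$ imaginary and $\module{\mathbf v}^{2}=\gamma-\beta^{2}/4$), and otherwise it has one or two solutions. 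This already proves the final sentence of the theorem.

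It remains to pin down the matrices in the spherical case. The plan is to push the conditions $\beta,\gamma\in\R$ back through the symplectic relations: $\gamma=-cb^{-1}\in\R$ gives $c=\lambda b$ with $\lambda\in\R$, and $\gamma=-\lambda>0$ forces $\lambda<0$; $\beta=(a-d)b^{-1}\in\R$ gives $a-d=\mu b$ with $\mu\in\R$. Substituting $c=\lambda b$ and $a=d+\mu b$ into $a\bar c+b\bar d=0$ and comparing imaginary parts shows $d\bar b\in\R$, so $d$, and hence $a$, is a real multiple of $b$ as well; thus $A=L_{b}\circ M$ for a real matrix $M$, and since $L_{b}$ interacts with real matrices as a scalar, $I=AA^{*}=\module b^{2}MM^{t}$, so $\module b\,M\in O(2)$. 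As the $(1,2)$ and $(2,1)$ entries of $\module b\,M$ are $\module b>0$ and $\lambda\module b<0$, the matrix $\module b\,M$ must be a rotation $R_{\theta}$ with $\sin\theta\neq0$, whence $A=L_{q}\circ R_{\theta}$ with $q=b/\module b$. Conversely, for such an $A$ we have $L_{q}\circ R_{\theta}\,v=\sigma v\iff R_{\theta}\,v=(q^{-1}\sigma)v$, and a direct computation gives that $\mu$ is a left eigenvalue of the real rotation $R_{\theta}$ iff $(\mu-\cos\theta)^{2}=-\sin^{2}\theta$, i.e. iff $\mu=\cos\theta+\sin\theta\,\mathbf n$ with $\mathbf n$ imaginary of norm $1$; so the left spectrum of $A$ is the infinite set $\{q(\cos\theta+\sin\theta\,\mathbf n)\}$. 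I expect the quaternionic-quadratic step to be the crux: the clean dichotomy ``infinitely many versus at most two'' together with the algebraic criterion on $\beta,\gamma$ is the substantive input, borrowed from \cite{HUSO}, whereas the symplectic bookkeeping, though elementary, needs care with non-commutativity.
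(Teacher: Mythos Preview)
The paper does not actually prove this theorem: it is quoted from \cite{MACPER}, where the authors obtained it from the Huang--So classification in \cite{HUSO}. Your argument is a correct, self-contained reconstruction of exactly that route: reduce the left-eigenvalue condition (after disposing of the diagonal case) to the unilateral quadratic $u^{2}+u\beta+\gamma=0$ via the Schur complement, invoke the Huang--So trichotomy to see that infinitely many solutions occur precisely when $\beta,\gamma\in\R$ with $\beta^{2}-4\gamma<0$, and then push these reality conditions through the symplectic relations to force all entries to be real multiples of a single unit quaternion, yielding $A=L_{q}\circ R_{\theta}$ with $\sin\theta\neq0$. The bookkeeping (in particular the check that $\sigma=a$ is never a left eigenvalue when $b,c\neq0$, so the Schur reduction is always available, and the imaginary-part comparison giving $d\bar b\in\R$ because $\lambda<0\neq1$) is handled correctly.
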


The matrix $L_q\circ R_\theta$ above corresponds to the composition of a real rotation $R_\theta\neq\pm \mathrm{id}$ with  a left translation $L_q$, $\module{q}=1$. We need to characterize its eigenvalues.


\begin{lemma}\label{REAL} Let  the matrix $A=L_q\circ R_\theta$ be as in Theorem \ref{INFINITE} and let  $\sigma\in\HH$ be a quaternion. The following conditions are equivalent:
\begin{enumerate}
\item
$\sigma$  is a left eigenvalue of $A$;
 \item
$\sigma = q(\cos\theta +\sin\theta\cdot\omega)$ with $\omega\in\langle i,j,k\rangle_\R$, $\vert\omega\vert=1$;
\item
$\vert\sigma\vert=1$ and 
$\Re(\bar{q}\sigma)=\cos\theta$;
\item
$\bar q\sigma$ is conjugate to $\cos\theta + i\,\sin\theta$.
\end{enumerate}
\end{lemma}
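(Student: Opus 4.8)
The plan is to prove $(1)\Leftrightarrow(2)$ directly from the definition, by analysing when $A-\sigma I$ fails to be invertible, and then to deduce $(2)\Leftrightarrow(3)\Leftrightarrow(4)$ by routine quaternionic algebra. Throughout I would set $\tau=\bar q\sigma$; since $\module{q}=1$ this is a reversible change of variable ($\sigma=q\tau$), it preserves norms ($\module\sigma=\module\tau$), and because $\cos\theta,\sin\theta$ are real, hence central in $\HH$, all the matrix manipulations below remain compatible with the right $\HH$-module structure of $\HH^2$.

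For $(1)\Leftrightarrow(2)$ I would first record the factorization
$$
A-\sigma I=\diag(q,q)\cdot\left[\matrix{\cos\theta-\tau & -\sin\theta\cr \sin\theta & \cos\theta-\tau}\right],
$$
valid because $q(\cos\theta-\tau)=q\cos\theta-q\bar q\sigma=q\cos\theta-\sigma$. As $\diag(q,q)$ is invertible, $\sigma$ is a left eigenvalue of $A$ exactly when the second matrix $M$ is singular. Solving $Mv=0$ for $v=(x,y)^t$: the first row gives $y=(\sin\theta)^{-1}(\cos\theta-\tau)x$ (here I use $\sin\theta\neq 0$), and substituting into the second row and clearing the scalar $\sin\theta$ yields $\bigl[(\cos\theta-\tau)^2+\sin^2\theta\bigr]x=0$. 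A nonzero $v$ forces $x\neq 0$ (otherwise $y=0$ as well), so $M$ is singular if and only if
$$
(\tau-\cos\theta)^2=-\sin^2\theta .
$$

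The key algebraic input is then the elementary remark that, for a nonzero real number $s$, a quaternion $p$ satisfies $p^2=-s^2$ if and only if $p$ is purely imaginary with $\module p=\module s$: writing $p=p_0+\vec p$ one has $p^2=(p_0^2-\module{\vec p}^2)+2p_0\vec p$, which is a negative real precisely when $p_0=0$ and $\module{\vec p}=\module s$. Applying this with $p=\tau-\cos\theta$, the displayed condition becomes $\tau=\cos\theta+\sin\theta\cdot\omega$ with $\omega=(\sin\theta)^{-1}(\tau-\cos\theta)$ a unit purely imaginary quaternion; reading this back through $\sigma=q\tau$ gives exactly $(2)$, and the converse is clear. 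For the remaining equivalences: $(2)\Rightarrow(3)$ by taking the real part and the norm of $\bar q\sigma=\cos\theta+\sin\theta\cdot\omega$; $(3)\Rightarrow(2)$ because when $\module{\bar q\sigma}=1$ and $\Re(\bar q\sigma)=\cos\theta$ the imaginary part of $\bar q\sigma$ has norm $\module{\sin\theta}$, so dividing it by $\sin\theta$ recovers the required $\omega$; and $(3)\Leftrightarrow(4)$ is the standard fact that two quaternions are conjugate iff they have the same real part and the same modulus, applied to $\bar q\sigma$ and $\cos\theta+i\sin\theta$ (which has real part $\cos\theta$ and modulus $1$), together with $\module{\bar q\sigma}=\module\sigma$.

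The one place that demands care is the noncommutative reduction in $(1)\Leftrightarrow(2)$: one must check that the elimination inside $M$ is legitimate over $\HH$ and keep track of the order of the factors in $(\cos\theta-\tau)^2$. This is harmless here only because $\sin\theta$ and $\cos\theta$ are central; once $\tau$ is isolated, everything else is bookkeeping with real parts and norms and presents no difficulty.
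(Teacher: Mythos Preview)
Your argument is correct and follows the same route the paper indicates: you carry out explicitly the ``direct computation from the definition'' that the paper only alludes to for $(1)\Leftrightarrow(2)$, and your treatment of $(2)\Leftrightarrow(3)$ and $(3)\Leftrightarrow(4)$ coincides with the paper's (no real part for $\bar q\sigma-\cos\theta$, and the standard conjugacy criterion via norm and real part). The noncommutative caveat you flag is exactly the right one, and your handling of it---using that $\cos\theta,\sin\theta$ are central so the elimination and the expression $(\cos\theta-\tau)^2$ are unambiguous---is sound.
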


\begin{proof} 
Part 2 can be checked by a direct computation from the definition or by using the results  in \cite{HUSO};
part 3 follows because $\bar{q}\sigma-\cos\theta$ has not real part;
finally part 4 is proved from the fact that two quaternions are conjugate if and only if they have the same norm and the same real part.
\end{proof}

We also need the following elementary result.

\begin{lemma}\label{ACOTACION}
Let $M\in\mathcal{M}(n+1,\R)$ be a real matrix with rows $\sigma_1,\dots,\sigma_{n+1}$ and let $w\in\R^{n+1}$, $w\not=0$. Suppose that $M$ has maximal rank $n+1$ and that its rows have euclidean norm $1$. Then,
$$\vert M\cdot w\vert<\sqrt{n+1}\,\vert w\vert.$$
\end{lemma}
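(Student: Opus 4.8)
The plan is to expand $\vert M\cdot w\vert^2$ coordinate by coordinate, bound each term by the Cauchy--Schwarz inequality, and then invoke the rank hypothesis to exclude the equality case.

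First I would observe that the $i$-th coordinate of $M\cdot w$ is the Euclidean inner product $\langle\sigma_i,w\rangle$, so that
$$\vert M\cdot w\vert^2=\sum_{i=1}^{n+1}\langle\sigma_i,w\rangle^2.$$
Applying Cauchy--Schwarz to each summand and using $\vert\sigma_i\vert=1$ gives $\langle\sigma_i,w\rangle^2\le\vert\sigma_i\vert^2\,\vert w\vert^2=\vert w\vert^2$, whence $\vert M\cdot w\vert^2\le(n+1)\,\vert w\vert^2$. This is the non-strict form of the asserted bound.

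It then remains to rule out equality. If equality held in the displayed sum, it would hold in each of the $n+1$ instances of Cauchy--Schwarz, so every row $\sigma_i$ would be a scalar multiple of the fixed nonzero vector $w$; since $\vert\sigma_i\vert=1$ this forces $\sigma_i=\pm w/\vert w\vert$ for all $i$. But then all rows of $M$ are collinear, so $\rank M\le 1<n+1$ (implicitly using $n\ge 1$, which holds in the intended application where $n+1=4$), contradicting the assumption that $M$ has maximal rank. Hence the inequality is strict. The only point requiring a little care is this equality analysis; everything else is a one-line estimate, which matches the paper's description of the result as elementary.
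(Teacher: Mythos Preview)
Your argument is correct and coincides with the paper's own proof: both compute $\vert Mw\vert^2=\sum_i\langle\sigma_i,w\rangle^2$, bound each summand by $\vert w\vert^2$ via Cauchy--Schwarz (the paper phrases this with the cosine of the angle), and then rule out simultaneous equality by observing that it would force all rows to be scalar multiples of $w$, contradicting their linear independence. Your explicit remark that the strictness step needs $n\ge 1$ is a small refinement the paper leaves implicit.
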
 

\begin{proof}
Let the matrix $M=(m_{ij})$ and let $w=\sum\nolimits_{j=1}^{n+1} w_j e_j$ where $e_j$ are the vectors of the canonical basis. Then
\begin{eqnarray}\label{DESIG}
\vert M\cdot w \vert &=&\nonumber\\
  \vert \sum\limits_{i=1}^{n+1} (\sum\limits_{j=1}^{n+1} m_{ij}w_j)e_i\vert 
&=&\nonumber\\
\left[ \sum\limits_{i=1}^{n+1}
(\sum\limits_{j=1}^{n+1} w_j m_{ij})^2\right]^{1/2} 
&=& \\
\left[\sum\limits_{i=1}^{n+1} \langle w,\sigma_i\rangle^2\right]^{1/2}\nonumber,
\end{eqnarray}
where $\langle,\rangle$ is the scalar product in $\R^{n+1}$.
Moreover, for any $1\leq i\leq n+1$ we have
$$
\langle w,\sigma_i\rangle 
=\vert w\vert\, \vert\sigma_i\vert  \cos\sphericalangle(w,\sigma_i)= \vert w \vert\cos\sphericalangle(w,\sigma_i)\leqslant \vert w\vert
$$
and equality implies that   $w$ is a multiple of $\sigma_i$. Since the rows $\sigma_i$ are $\R$-independent by hypothesis, the vector $w\not=0$ can not be in the direction of the $n+1$ rows at the same time, so $\langle w,\sigma_i\rangle <   \vert w\vert$ for some $i$. Then, from  (\ref{DESIG}), 
$$\vert M\cdot w\vert <\left((n+1)\vert w\vert^2\right)^{1/2}=\sqrt{n+1}\vert w \vert.$$
\end{proof}

Next theorem is the main result of this paper.

\begin{theorem}\label{CUATRONO} Let $\sigma_1,\dots,\sigma_4$ be four  quaternions with norm $1$. Then there exists a matrix $A\in Sp(2)$ for which  those  quaternions   are left eigenvalues.  
\end{theorem}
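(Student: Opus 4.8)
The plan is to produce $A$ in the special form $L_q\circ R_\theta$ of Theorem~\ref{INFINITE}. This is essentially forced: we are asked for four left eigenvalues, and by Theorem~\ref{INFINITE} every symplectic $2\times2$ matrix not of that form has at most two. Now Lemma~\ref{REAL}(3) says that, given $\module{\sigma}=1$, the quaternion $\sigma$ is a left eigenvalue of $L_q\circ R_\theta$ exactly when $\Re(\bar q\sigma)=\cos\theta$. The first thing I would record is the elementary identity $\Re(\bar q\sigma)=\langle q,\sigma\rangle$, the Euclidean scalar product, under the identification $\HH\cong\R^4$. Hence the whole problem reduces to the following: find a unit vector $q\in\R^4$ and an angle $\theta$ with $\sin\theta\neq0$ such that $\langle q,\sigma_i\rangle=\cos\theta$ for $i=1,\dots,4$; equivalently, $q$ must be orthogonal to every difference $\sigma_i-\sigma_1$, and the resulting common value $c:=\langle q,\sigma_i\rangle$ must satisfy $\module{c}<1$.

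I would then argue by cases on the rank of the $4\times4$ real matrix $M$ whose rows are $\sigma_1,\dots,\sigma_4$. If $\rank M<4$, the orthogonal complement of the row space of $M$ is nonzero; choosing a unit vector $q$ in it gives $\langle q,\sigma_i\rangle=0$ for all $i$, and one simply takes $\theta=\pi/2$. If $\rank M=4$, the $\sigma_i$ are $\R$-linearly independent, so the three vectors $\sigma_2-\sigma_1,\sigma_3-\sigma_1,\sigma_4-\sigma_1$ are linearly independent as well (any linear relation among them produces one among the $\sigma_i$) and span a $3$-dimensional subspace $V\subset\R^4$; its orthogonal complement is a line, and for a unit generator $q$ of that line the number $c=\langle q,\sigma_i\rangle$ is independent of $i$. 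It remains to choose $\theta$ with $\cos\theta=c$, which is possible with $\sin\theta\neq0$ provided $\module{c}<1$.

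The one genuinely delicate point — and the reason Lemma~\ref{ACOTACION} is needed — is exactly this bound $\module{c}<1$ in the full-rank case, i.e.\ excluding $c=\pm1$, for otherwise $R_\theta=\pm\id$ and the construction collapses. But $Mq=(c,c,c,c)^{t}$ has norm $2\module{c}$ while $\module{q}=1$, so Lemma~\ref{ACOTACION} with $n=3$ yields $2\module{c}=\module{Mq}<2$, using precisely that $M$ has maximal rank and unit rows. With $q$ and $\theta$ in hand in either case, each $\sigma_i$ satisfies $\module{\sigma_i}=1$ and $\Re(\bar q\sigma_i)=\cos\theta$ with $\sin\theta\neq0$, so Lemma~\ref{REAL} guarantees that $\sigma_i$ is a left eigenvalue of $A=L_q\circ R_\theta\in Sp(2)$, finishing the proof.
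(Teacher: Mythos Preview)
Your proof is correct and follows essentially the same route as the paper: the same case split on $\rank M$, taking $\theta=\pi/2$ when the rank is deficient and invoking Lemma~\ref{ACOTACION} to force $\lvert\cos\theta\rvert<1$ in the full-rank case. The only cosmetic difference is that you first choose the unit vector $q$ orthogonal to the differences $\sigma_i-\sigma_1$ and then read off $c=\cos\theta$, whereas the paper solves $Mq=\cos\theta\cdot u$ for $q$ and adjusts $\cos\theta$ to normalise; these are the same computation, and your application of Lemma~\ref{ACOTACION} with $w=q$ is the paper's application with $w=M^{-1}u$ after rescaling.
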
 

\begin{proof}
Accordingly to Theorem \ref{INFINITE}
the matrix  must be of the form $A=L_q\circ R_\theta$, $\vert q \vert =1$, $\sin\theta\neq 0$.

Now,  part (3) of Lemma \ref{REAL} means that, in order to find $A$, we have to fix a possible $\cos\theta\neq \pm1$ --the exact value of $\cos\theta$ will be determined later--,
 and then to solve the system of linear equations
\begin{equation}\label{SISTEMA}
\Re(\bar{q}\sigma_m)=\cos\theta,\quad m=1,\dots, 4.
\end{equation}
Moreover, the solution $q$ must verify $\vert q\vert =1$.
 
Let us write
$$q=t+xi+yj+zk, \quad t,x,y,z\in\R,$$ and analogously 
$$\sigma_m=t_m+x_mi+y_mj+z_mk, \quad m=1,\dots, 4.$$ 
Then  system (\ref{SISTEMA}) can  be written as

$$
\pmatrix{
t_1 & x_1& y_1 & z_1\cr
\vdots&\vdots&\vdots&\vdots\cr
t_4&x_4&y_4&z_4\cr
}
\cdot
\pmatrix{
t\cr 
x\cr 
y\cr 
z\cr
}
=
\cos\theta\cdot 
\pmatrix{
1\cr 1\cr 1\cr 1\cr
}
$$
  
which we abbreviate as 
\begin{equation}\label{SISTEMA2}
M\cdot q = \cos \theta\cdot u.
\end{equation}
  
  If  $0<\rank M<4$ we   take $\cos\theta=0$, so system (\ref{SISTEMA2}) is homogeneous. The set of solutions is a non trivial vector space, hence it contains at least one solution $q$ with norm $\vert q \vert=1$. 
 Then a matrix having    $\sigma_1,\dots,\sigma_4$ as eigenvalues would be, for instance, $A=L_q\circ R_{\pi/2}.$
 
 On the other hand, if  the matrix $M$ has maximal rank $4$,  it is invertible and the unique solution of (\ref{SISTEMA2}) is given by 
\begin{equation}\label{SOLUCION}
q=\cos\theta \cdot M^{-1}\cdot u.
\end{equation}
By Lemma \ref{ACOTACION} above we have 
$$2=\vert uÊ\vert = \vert M\cdot M^{-1}\cdot u\vert < \sqrt{4}\,\vert M^{-1}\cdot u \vert,$$
hence $\vert M^{-1}u\vert>1$ and so we can choose $\theta$ such that \begin{equation}\label{ANGLE}
0<\vert\cos\theta\vert=\frac{1}{\vert M^{-1}u\vert}<1
\end{equation}
which implies
$$\vert q\vert=\vert \cos\theta\vert \vert M^{-1}u\vert=1.$$ 
With that angle $\theta$ and the solution $q$ in (\ref{SOLUCION}) we have obtained a matrix $A=L_q\circ R_\theta$ having
$\sigma_m$, $m=1,\dots,4$   among its eigenvalues. 
 \end{proof}

 \begin{example}\label{EJEMPLO}
The four quaternions $1,i,j,k$  give rise to the system
$$\id \cdot q=\cos\theta \cdot u,$$
with unitary solutions $q=\pm (1/2) u$
(we are using (\ref{SOLUCION}) and (\ref{ANGLE})). Then, the only  two symplectic matrices having those four numbers as left eigenvalues are 
$$\frac{1}{4}\left(\begin{array}{cc}u & -\sqrt{3}u\\ 
\sqrt{3}u &u\end{array}\right)$$
and
$$\frac{1}{4}\left(\begin{array}{cc}u & \sqrt{3}u \\ -\sqrt{3}u & u\end{array}\right),$$
where $u=1+i+j+k$.
\end{example}
 
\section{Application to LS category}\label{LS}

The Lusternik-Schnirelmann category of a topological space $X$, denoted by $\cat X$, is the minimum number of open sets (minus one), contractible in $X$, which are needed to cover $X$. This  homotopical invariant  has been widely studied  and has many applications which go from  the calculus of variations to robotics, see \cite{DANIEL,FARBER,JAMES}. The computation of the LS category of Lie groups and homogeneous spaces is a  central problem in this field, where many questions are still unanswered. For instance, for the symplectic groups the only known results are $\cat \SP(1)=1$, $\cat \SP(2)=3$ and $\cat \SP(3)=5$ \cite{TATO}. 

There is a standard technique which has been successfully applied in the complex setting, for instance to the unitary group $U(n)$ \cite{SINGHOF}  and to the symmetric spaces  $U(2n)/Sp(n)$ and $U(n)/O(n)$ \cite{MISU}.  It consists in considering, for a given complex number $z$ with $\vert z \vert=1$, the set $\Omega(z)$ of unitary matrices $A$ such that $A-z I$ is invertible. It turns out that this set is contractible. So, since a unitary $n\times n$ matrix can not  have simultaneously $n+1$ different eigenvalues, it  is possible to cover   the cited spaces  by $n+1$ contractible open sets $\Omega(z_1),\dots,\Omega(z_{n+1})$, showing that they have category $\leq n$ (that $n$ is also a lower bound can be proved with homological methods). 

In the quaternionic setting, we must consider   {\em left} eigenvalues. If $\sigma\in\HH$, the open set
$$\Omega(\sigma)=\{A\in Sp(2)\colon A-\sigma  I \mathrm{\ is \ invertible}\}$$ 
is contractible, for instance by means of the Cayley contraction
$$A_t=\frac{(1+t)A-(1-t)\sigma I}{(1+t)I-(1-t)\bar \sigma A}, \quad t\in [0,1]$$
 (see \cite{MACPERTATO} for a general discussion). Hence our main result  in this paper (Theorem \ref{CUATRONO}) implies that four contractible sets of the type $\Omega(\sigma)$ will never cover $Sp(2)$, despite  the fact that $\cat Sp(2)=3$.  
  
\begin{remark} We observe (cf. Lemma \ref{REAL}) that all the eigenvalues $\sigma$ of the two matrices in Example \ref{EJEMPLO} verify  $\Re(\overline{q}\sigma)=\pm 1/2$.  Then if we take 
$\sigma_5=(i+j)/\sqrt{2}$, we have $\Re (\overline{q}\sigma_5)= \pm 1/\sqrt{2}$, hence the two matrices belong to $\Omega(\sigma_5)$. So five open sets associated to eigenvalues do suffice to cover the group.
\end{remark}



\end{document}